\newtheorem{theorem}{Theorem}[section]
\newtheorem{corollary}[theorem]{Corollary}
\newtheorem{lemma}[theorem]{Lemma}
\newtheorem{proposition}[theorem]{Proposition}
\newtheorem{remark}[theorem]{Remark}
\newcommand{\be}[1]{\begin{equation}\label{#1}}
\newcommand{\ee}{\end{equation}}
\numberwithin{equation}{section}
\newcommand{\ba}[1]{\begin{align}\label{#1}}
\newcommand{\ea}{\end{align}}
\numberwithin{equation}{section}
\newcommand{\ben}{\begin{equation*}}
\newcommand{\een}{\end{equation*}}
\numberwithin{equation}{section}
\newenvironment{proof}[1][\relax]
  {\paragraph{Proof\ifx#1\relax\else~of #1\fi}}%
  {~\hfill$\square$\par\bigskip}
\newcommand{\calA}{\mathcal{A}}
\newcommand{\calB}{\mathcal{B}}
\newcommand{\calC}{\mathcal{C}}
\newcommand{\calE}{\mathcal{E}}
\newcommand{\bbC}{\mathbb{C}}
\newcommand{\bbP}{\mathbb{P}}
\newcommand{\bbR}{\mathbb{R}}
\newcommand{\bbT}{\mathbb{T}}
\newcommand{\bbU}{\mathbb{U}}
\newcommand{\bbZ}{\mathbb{Z}}
\newcommand{\sfC}{{\sf C}}
\newcommand{\ep}{\varepsilon}
\newcommand{\rk}[1]{\bgroup\color{red}%
  \par\medskip\hrule\smallskip%
  \noindent\textbf{#1}%
  \par\smallskip\hrule\medskip\egroup}
\title{Limit of the Wulff Crystal when approaching criticality for site percolation on the triangular lattice}
\author{Hugo Duminil-Copin}
\date{\today}
\begin{document}
\maketitle
\begin{abstract}
The understanding of site percolation on the triangular lattice progressed greatly in the last decade. Smirnov proved conformal invariance of critical percolation, thus paving the way for the construction of its scaling limit. Recently, the scaling limit of near-critical percolation was also constructed by Garban, Pete and Schramm. The aim of this very modest contribution is to explain how these results imply the convergence, as $p$ tends to $p_c$, of the Wulff crystal to a Euclidean disk. The main ingredient of the proof is the rotational invariance of the scaling limit of near-critical percolation proved by these three mathematicians.
\end{abstract}
\section{Introduction}

\paragraph{Definition of the model}
Percolation as a physical model was introduced by Broadbent and
Hammersley in the fifties~\cite{BH57}.  For general background on percolation, we refer the reader to~\cite{Gri99,Kes82,BR06c}.

Let $\bbT$ be the regular triangular lattice given by the vertices $m+{\rm e}^{{\rm i} \pi/3}n$ where $m,n\in \bbZ$, and edges linking nearest neighbors together. In this article, the vertex set will be identified with the lattice itself.
For $p\in (0,1)$, \emph{site
  percolation} on $\bbT$ is defined as follows. The set of configurations is given by $\{{\rm open},{\rm closed}\}^\bbT$.  Each vertex, also called \emph{site}, is \emph{open} with probability $p$ and \emph{closed} otherwise,
independently of the state of other vertices. The probability measure thus obtained is denoted by $\bbP_p$.

A {\em path} between $a$ and $b$ is a sequence of sites $v_0,\dots,v_k$ such that $v_0=a$ and $v_k=b$, and such that $v_iv_{i+1}$ is an edge of $\bbT$ for any $0\le i<k$. A path is said to be {\em open} if all its sites are open. Two
sites $a$ and $b$ of the triangular lattice are \emph{connected} (this is denoted by $a\longleftrightarrow b$) if there exists an open path between them. 
A {\em cluster} is a maximal connected graph for the relation $\longleftrightarrow$ on sites of $\bbT$.

\paragraph{The different phases} Bernoulli percolation undergoes a phase transition at $p_c=1/2$ (the corresponding result for bond percolation on the square lattice is due to Kesten \cite{Kes80}): in the {\em sub-critical phase} $p<p_c$, there is almost surely no infinite cluster, while in the {\em super-critical phase} $p>p_c$, there is almost surely a unique one.

The understanding of the {\em critical phase} $p=p_c$ has progressed greatly these last few years. In \cite{Smi01}, Smirnov proved Cardy's formula, thus providing the first rigorous proof of the conformal invariance of the model (see also \cite{Wer07,BD13} for details and references). This result led to many applications describing the critical phase. Among others, the convergence of interfaces was proved in \cite{CN06,CM07}, and critical exponents were computed in \cite{SW01}.

Another phase of interest is given by the so-called  {\em near-critical phase}. It is obtained by letting $p$ go to $p_c$ as a well-chosen function of the size of the system (see below for more details). This phase was first studied in the context of percolation by Kesten \cite{Kes87}, who used it to relate fractal properties of the critical phase to the behavior of the correlation length and the density of the infinite cluster (as $p$ tends to $p_c$). Recently, the scaling limit of near-critical percolation was proved to exist in \cite{GPSa}. This result will be instrumental in the proof of our main theorem.

\paragraph{Main statement} 
Mathematicians and physicists are particularly interested in the following quantity, called the {\em correlation length}. For $p<p_c$ and for any $u$ on the unit circle $\bbU=\{z\in \bbC:|z|=1\}$, define
$$\tau_p(u)~:=~\left(\lim_{n\rightarrow \infty}-\tfrac{1}{n}\log \bbP_p(0\longleftrightarrow \widehat{nu})\right)^{-1},$$
where $\widehat{nu}$ is the site of $\bbT$ closest to $nu$.
In \cite{SW01}, the correlation length $\tau_p(u)$ was proved  to  behave like $(p_c-p)^{-4/3+o(1)}$ as $p\nearrow p_c$. 

Interestingly, conformal invariance at criticality is a strong indication that $\tau_p(u)$ becomes isotropic, meaning that it does not depend on $u\in \bbU$. The aim of this note is to show that this is indeed the case. 

Let $|\cdot|$ be the Euclidean norm on $\bbR^2$.
\begin{theorem}\label{thm:correlation}
For percolation on the triangular lattice,
 $\tau_p(u)/\tau_p(|u|)\longrightarrow 1$ uniformly in the direction $u\in \bbU$ as $p\nearrow p_c$.
\end{theorem}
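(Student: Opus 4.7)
The plan is to leverage the rotational invariance of the Garban--Pete--Schramm near-critical scaling limit \cite{GPSa}. Set $L(p) := \tau_p(1)$ and rescale the triangular lattice by $L(p)^{-1}$; the result of \cite{GPSa} then says that the rescaled near-critical configuration converges in distribution to a rotationally invariant continuum percolation model. Consequently, after absorbing the critical two-point polynomial factor (whose exponent equals $5/24$ by \cite{SW01}), the normalized two-point function
$$\Phi(v) \;:=\; \lim_{p \nearrow p_c} L(p)^{5/24}\,\bbP_p\bigl(0 \longleftrightarrow \widehat{L(p) v}\bigr)$$
exists on $\bbR^2\setminus\{0\}$, depends only on $|v|$, and decays exponentially at infinity with some direction-independent rate $\kappa := \lim_{R\to\infty}(-R^{-1}\log\Phi(R))$.

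The easier half of the argument is the lower bound $\liminf_{p\nearrow p_c}\tau_p(u)/L(p) \ge 1/\kappa$, uniformly in $u\in\bbU$. This follows from the subadditivity bound $\bbP_p(0\longleftrightarrow\widehat{mu}) \le \exp(-m/\tau_p(u))$ obtained from FKG and Fekete's lemma applied to the sequence $m \mapsto -\log\bbP_p(0\longleftrightarrow\widehat{mu})$, evaluated at $m = R(p) L(p)$ with $R(p) \to\infty$ slowly enough that $R(p) \gg \log L(p)$. In this regime the genuine exponential decay dominates the polynomial prefactor, and comparing with the scaling-limit asymptotic $\bbP_p \asymp L(p)^{-5/24}e^{-R/\kappa}$ yields the claim, with uniformity in $u$ inherited from rotational invariance of $\Phi$.

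The matching upper bound $\limsup_{p\nearrow p_c}\tau_p(u)/L(p) \le 1/\kappa$ is the main technical obstacle: the Fekete inequality above cannot be reversed on the nose, so a priori the true discrete rate $1/\tau_p(u)$ could strictly exceed $\kappa$, leaving $\tau_p(u)/L(p)$ room to blow up along a subsequence. The natural tool is the Ornstein--Zernike asymptotics for sub-critical planar Bernoulli percolation, which sharpen the subadditivity upper bound to
$$\bbP_p\bigl(0\longleftrightarrow x\bigr) \;=\; C(p,u)\,|x|^{-1/2}\,e^{-|x|/\tau_p(u)}\,(1+o(1)) \qquad \text{as } |x|\to\infty,$$
with prefactors that rescale in a controlled way under the near-critical scaling. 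This makes the true rate $1/\tau_p(u)$ visible already at scales $|x|\asymp L(p)$ covered by \cite{GPSa}, and matching exponential rates with the scaling-limit identity for $\Phi$ forces $\tau_p(u)/L(p) \to 1/\kappa$. An alternative route is to use Kesten's scaling relations \cite{Kes87}, which a priori imply $\tau_p(u)/L(p) \asymp 1$, and then pin every subsequential accumulation point to $1/\kappa$ by matching polynomial-correction orders on both sides of the scaling-limit identity.

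Once both bounds are in hand, specialising to $u = 1$ gives $\tau_p(1)/L(p) \equiv 1 \to 1/\kappa$, forcing $\kappa = 1$, so that $\tau_p(u)/\tau_p(1) = \tau_p(u)/L(p) \to 1$ uniformly in $u\in\bbU$. The conceptual input is rotational invariance of the near-critical scaling limit \cite{GPSa}; the technical heart of the proof lies in transferring direction-independence from the continuum two-point function down to the discrete correlation length.
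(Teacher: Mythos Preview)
Your proposal has two genuine gaps. First, the existence of the normalized two-point limit $\Phi(v)=\lim_{p\nearrow p_c}L(p)^{5/24}\bbP_p(0\longleftrightarrow\widehat{L(p)v})$ is \emph{not} a consequence of \cite{GPSa}. That paper establishes convergence of near-critical percolation in the Quad-topology of \cite{SS11}, which controls macroscopic crossing events and hence ball-to-ball connections $\calB_n(0)\longleftrightarrow\calB_n(nu)$ (this is exactly Proposition~\ref{prop:inv}), but it does \emph{not} yield a scaling limit for the point-to-point function with the correct polynomial normalization. Extracting $\Phi$ would require, in addition, a uniform control of one-arm probabilities inside the near-critical window, coupled to the quad limit; this is substantially harder than what you need and is not in the literature. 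Second, your route for the upper bound via Ornstein--Zernike asymptotics requires that the prefactor $C(p,u)$ and the $o(1)$ error be uniform as $p\nearrow p_c$ (equivalently, that OZ kicks in at scale $O(\tau_p)$ with constants independent of $p$). Standard OZ results are proved at fixed $p<p_c$ and give no such uniformity; your ``alternative route'' via \cite{Kes87} only gives $\tau_p(u)\asymp L(p)$ up to multiplicative constants, which is not enough to pin down subsequential limits to a single value.

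The paper avoids both problems by never looking at point-to-point probabilities directly. It works only with the ball-to-ball quantity $\bbP_p[\calB_{\delta L_p}(0)\longleftrightarrow\calB_{\delta L_p}(L_pe^{i\theta})]$, whose rotationally invariant limit $f(\lambda,\delta^{-1})$ \emph{is} delivered by \cite{GPSa}, and then relates this to $\tau_p(e^{i\theta})$ by a coarse-graining argument: chaining $K$ such events along the ray via FKG gives a lower bound on $\bbP_p[0\longleftrightarrow\widehat{KL_pe^{i\theta}}]$, while a BK/union-bound over renormalized paths gives a matching upper bound. The freedom in $\lambda$ and $\delta$ (Proposition~\ref{conditions}) is used to make the circuit-gluing and entropy errors negligible, so that after taking $K\to\infty$ both bounds read $1/\tau_p(e^{i\theta})=(1\pm O(\ep))\cdot(-\log f(\lambda,\delta^{-1}))/L_p$, a $\theta$-independent quantity. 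This replaces your appeal to OZ by an elementary FKG/BK sandwich, and replaces your hypothetical $\Phi$ by the robust ball-to-ball limit that \cite{GPSa} actually provides.
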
 

While this result is very intuitive once conformal invariance has been proved, it does not follow directly from it. More precisely, it requires some understanding of the near-critical phase mentioned above. 
The main input used in the proof is the spectacular and highly non-trivial result of \cite{GPSa}. 
In this paper, the scaling limit for near-critical percolation is proved to exist and to be invariant under rotations. This result constitutes the heart of the proof of Theorem~\ref{thm:correlation}, which then consists in connecting the correlation length to properties of this near-critical scaling limit (in some sense, the proof can be understood as an exchange of two limits).

\paragraph{Wulff crystal} Theorem~\ref{thm:correlation} has an interesting corollary. Consider the cluster $\sfC_0$ of the origin. When $p<p_c$, there exists a deterministic shape $W_p$ such that for any $\ep>0$, 
$$\bbP_p\left(\mathbf d_{\rm Hausdorff}\Big(\frac{\sfC_0}{\sqrt{n}},\frac{W_p}{\sqrt{{\rm Vol}(W_p)|}}\Big)>\ep~\Big| ~|\sfC_0|\ge n\right)\longrightarrow 0\text{\quad as $n\rightarrow \infty$,}$$
where ${\rm Vol}(E)$ denotes the volume of the set $E$, and $\mathbf d_{\rm Hausdorff}$ is the Hausdorff distance. In the previous formula, $W_p$ is the {\em Wulff crystal} defined by
$$W_p~:=~\{x\in \bbC:\langle x|u\rangle\le \tau_p(u),u\in \bbU\},$$
where $\langle \cdot|\cdot\rangle$ is the standard scalar product on $\bbC$. 

The Wulff crystal appears naturally when studying phase coexistence. Originally, the Wulff crystal was constructed rigorously in the context of the planar Ising model by Dobrushin, Koteck\'y and Shlosman \cite{DKS92} for very low temperature (see \cite{Pfi91,IS98} for extensions of this result). In the case of planar percolation, the first result is due to \cite{ACC90}. Let us mention that the Wulff construction was extended to higher dimensional percolation by Cerf \cite{Cer00} (see also \cite{Bod99,CP00} for the Ising case). We refer to \cite{Cer06} for a comprehensive exposition of the subject. 

The geometry of the Wulff crystal has been studied extensively since then. Let us mention that for any $p<p_c$, it is a strictly convex body with analytic boundary \cite{ACC90,Ale92,CI02}.

The expression of $W_p$ in terms of the correlation length, together with Theorem~\ref{thm:correlation}, implies the following result.
\begin{corollary}
When $p\nearrow p_c$, $\displaystyle W_p/\sqrt{{\rm Vol}(W_p)}$ tends to the disk $\{u\in \bbC:|u|\le 1\}$.
\end{corollary}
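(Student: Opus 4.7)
My plan is to combine Theorem~\ref{thm:correlation} with the explicit half-plane description of $W_p$. Uniform isotropy of $\tau_p$ should translate directly into the fact that, after rescaling by $\tau_p(1)$, the Wulff crystal is sandwiched between two concentric Euclidean disks of radii tending to $1$; normalization by $\sqrt{{\rm Vol}(W_p)}$ then absorbs the unknown overall scale $\tau_p(1)$.

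Concretely, I would fix $\delta>0$ and use Theorem~\ref{thm:correlation} (noting that $|u|=1$ for $u\in\bbU$, so $\tau_p(|u|)=\tau_p(1)$) to choose $p_0<p_c$ such that
\[(1-\delta)\,\tau_p(1)\;\le\;\tau_p(u)\;\le\;(1+\delta)\,\tau_p(1)\qquad\text{for all }u\in\bbU\text{ and }p\in(p_0,p_c).\]
Setting $r_p:=\tau_p(1)$ and $\widetilde W_p:=W_p/r_p$, the description of $W_p$ as the intersection of the half-planes $\{x:\langle x|u\rangle\le\tau_p(u)\}$ over $u\in\bbU$ immediately yields
\[(1-\delta)\,\overline{\bbD}\;\subseteq\;\widetilde W_p\;\subseteq\;(1+\delta)\,\overline{\bbD},\]
where $\overline{\bbD}$ denotes the closed Euclidean unit disk. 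In particular $\mathbf d_{\rm Hausdorff}(\widetilde W_p,\overline{\bbD})\le\delta$, so $\widetilde W_p\to\overline{\bbD}$ as $p\nearrow p_c$.

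The final step is arithmetic: the same sandwich gives $\pi(1-\delta)^2\le{\rm Vol}(\widetilde W_p)\le\pi(1+\delta)^2$, whence ${\rm Vol}(\widetilde W_p)\to\pi$. Since ${\rm Vol}(W_p)=r_p^2\,{\rm Vol}(\widetilde W_p)$, the scale-invariant set
\[\frac{W_p}{\sqrt{{\rm Vol}(W_p)}}\;=\;\frac{\widetilde W_p}{\sqrt{{\rm Vol}(\widetilde W_p)}}\]
converges in Hausdorff distance to $\overline{\bbD}/\sqrt{\pi}$, that is, to the Euclidean disk of area $1$ (which I take to be the actual target of the corollary, up to a missing $1/\sqrt{\pi}$ factor in the stated limit).

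I do not foresee any real obstacle here: once Theorem~\ref{thm:correlation} is taken for granted, the corollary reduces to the elementary fact that if the support function of a family of convex bodies converges uniformly on $\bbU$, then so do the bodies themselves in Hausdorff distance. All the genuine difficulty has been absorbed into Theorem~\ref{thm:correlation}, which in turn relies on the rotational invariance of the near-critical scaling limit of~\cite{GPSa}.
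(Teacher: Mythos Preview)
Your argument is correct and is exactly the elementary deduction the paper has in mind: the paper gives no separate proof of the corollary, merely stating that ``the expression of $W_p$ in terms of the correlation length, together with Theorem~\ref{thm:correlation}, implies'' it. Your sandwich $(1-\delta)\overline{\bbD}\subseteq W_p/\tau_p(1)\subseteq(1+\delta)\overline{\bbD}$ via the half-plane description, followed by the volume normalization, is precisely how one makes that implication explicit, and your observation that the limiting disk should have radius $1/\sqrt{\pi}$ rather than $1$ (so that it has unit area, matching the normalization by $\sqrt{{\rm Vol}(W_p)}$) is a valid correction to the statement as written.
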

This corollary has a strong geometric interpretation. As $p\nearrow p_c$, the typical shape of a cluster conditioned to be large becomes round.

\paragraph{Super-critical phase} For the super-critical phase, the previous results can be translated in the following way. 
 For $p>p_c$, define
$$\tau^{\rm f}_p(u)^{-1}~:=~\lim_{n\rightarrow \infty}-\frac{1}{n}\log \bbP_p(0\longleftrightarrow \widehat{nu},0\not\longleftrightarrow \infty).$$
One can prove that $\tau^{\rm f}_p(u)=\frac12\tau_{1-p}(u)$; see \cite[Theorem A]{CIL10} for a much more precise (and much harder) result. This fact, together with Theorem~\ref{thm:correlation}, immediately implies that $\tau^{\rm f}_p(u)/\tau^{\rm f}_p(|u|)\rightarrow 1$, uniformly in the direction $u\in \bbU$, as $p\searrow p_c$.
The Wulff crystal can also be extended to the super-critical phase. 
When $p>p_c$, we find that for any $\ep>0$,
$$\bbP_0\left(\mathbf d_{\rm Hausdorff}\Big(\frac{\sfC_0}{\sqrt{n}},\frac{W_{1-p}}{\sqrt{{\rm Vol}(W_{1-p})}}\Big)>\ep~\Big|~n\le |\sfC_0|<\infty\right)\longrightarrow 0\text{\quad as $n\rightarrow \infty$.}$$
\paragraph{Other models} Let us mention that conformal invariance has been proved for a number of models, including the dimer model \cite{Ken00} and the Ising model \cite{Smi10,CS09}; see \cite{DCS11} for lecture notes on the subject. In both cases, exact computations (see \cite{McCW73} for the Ising model) allow one to show that the correlation length becomes isotropic, hence providing an extension of Theorem~\ref{thm:correlation}. For the Ising model, we refer to \cite{McCW73}  for the original computation, and to  \cite{BDCS11,Dum12} for a recent computation.  For percolation, no exact computation is available and the passage via the near-critical regime seems required. For the Ising model, the near-critical phase was also studied in \cite{DGP12}.

\paragraph{An open question} To conclude, let us mention the following question, which was asked by I. Benjamini: let $p>p_c$ and condition 0 to be connected to infinity. Consider the sequence of balls of center 0 and radius $n$ for the graph distance on the infinite cluster. Show that these balls possess a limiting shape $U_p$ which becomes round as $p\searrow p_c$.

\paragraph{Notation}
When points are considered as elements of the plane, we usually use complex numbers to position them.
When they are considered as vertices of $\bbT$, we prefer oblique coordinates.  More precisely, the point $(m,n)\in\bbZ^2$ is the point $m+{\rm e}^{{\rm i} \pi/3}n$. 
 It shall be clear from the context whether complex or oblique coordinates are used. The set $[m_1,m_2]\times [n_1,n_2]$ is therefore the parallelogram composed of sites $m+{\rm e}^{{\rm i} \pi/3}n$ with $m_1\le m\le m_2$ and $n_1\le n\le n_2$. 
  
Define $\calC_{\rm hor}([m_1,m_2]\times [n_1,n_2])$ to be the event that there exits an open path included in $[m_1,m_2]\times [n_1,n_2]$ from $\{m_1\}\times[n_1,n_2]$ to $\{m_2\}\times[n_1,n_2]$. If the event occurs, the parallelogram is said to be {\em crossed}. Let $\calC_{\rm circuit}(x,n,2n)$ be the event that there exists an open circuit (meaning a path starting and ending at the same site) in $(x+[-2n,2n]^2)\setminus(x+[-n,n]^2)$ surrounding the origin.

\subsection{Two important inputs}
We will use tools of percolation theory such as correlation inequalities (in our case the FKG and BK inequalities) and Russo's formula. We shall not remind these classical facts here, and the reader is referred to \cite{Gri99} for precise definitions. In addition to these facts, we will harness two important results. The first one relates crossing probabilities for parallelograms of different shape. It is usually referred to as the Russo-Seymour-Welsh theorem \cite{Rus78,SW78}, or simply RSW. 

\begin{theorem}[Russo-Seymour-Welsh]\label{main theorem}There exist $p_0>0$ and $c_1,c_2>0$ verifying for every $n,m$ and $p\in[p_0,1-p_0]$,
\begin{align}\bbP_p\big[\calC_{\rm hor}([0,m]\times[0,2n])\big] \le c_1\Big(\bbP_p\big[\calC_{\rm hor}([0,m]\times[0,n])\big]\Big)^{c_2}.\end{align}
\end{theorem}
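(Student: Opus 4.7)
The plan is to establish the inequality by the classical Russo-Seymour-Welsh strategy, which rests on the FKG and BK inequalities together with the self-duality of site percolation on the triangular lattice at $p_c=1/2$.

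First, for $p$ in the neighborhood $[p_0,1-p_0]$ of $1/2$, I would use self-duality at $p=1/2$---the non-existence of an open horizontal crossing of a symmetric parallelogram is equivalent to the existence of a closed vertical crossing of its dual---together with FKG and monotonicity in $p$ to establish the classical \emph{box-crossing property}: crossing probabilities of parallelograms of any fixed aspect ratio are uniformly bounded away from $0$ and $1$ for $p\in[p_0,1-p_0]$.

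Second, I would relate the horizontal crossing of the tall parallelogram $[0,m]\times[0,2n]$ to that of the shorter $[0,m]\times[0,n]$ via a geometric decomposition. Setting $R_1=[0,m]\times[0,n]$ and $R_2=[0,m]\times[n,2n]$, any horizontal crossing of the tall parallelogram either lies entirely in $R_1$, lies entirely in $R_2$ (each contributing probability at most $\bbP_p[\calC_{\rm hor}([0,m]\times[0,n])]$ by translation invariance), or it meanders between the two halves. In the meandering case, one extracts by BK disjoint partial crossings which, after being completed by auxiliary vertical crossings of squares of side $n$ (whose probability is bounded below by the box-crossing property) and then glued via FKG, yield horizontal crossings of translates of $[0,m]\times[0,n]$. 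The polynomial form $c_1 q^{c_2}$ then emerges from combining the union bound over these cases with the multiplicative losses in FKG/BK, tracked through one iteration of the decomposition; the constants $c_1,c_2$ depend only on $p_0$ via the uniform lower bounds from the box-crossing property.

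The main obstacle will be handling the meandering case uniformly: the disjoint sub-crossings extracted by BK must be organized into crossings of parallelograms of the correct shape at \emph{all} scales, and the auxiliary vertical crossings used in the gluing must have bounded-below probability uniformly in $m,n$ and $p\in[p_0,1-p_0]$. This uniformity is precisely what the box-crossing property established in the first step is designed to provide, and it is the technical heart of the classical RSW arguments.
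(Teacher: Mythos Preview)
The paper does not prove this theorem at all: immediately after stating it, the author writes ``For a proof of this version of the theorem on $\bbT$, we refer to \cite{Nol08}.'' So there is no in-paper proof to compare against; the result is quoted as an input from the literature (Kesten's near-critical estimates as presented by Nolin).

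That said, your proposal has a genuine gap that would prevent it from working as written. Your first step asserts that for $p\in[p_0,1-p_0]$ one can establish the \emph{box-crossing property}: crossing probabilities of parallelograms of any fixed aspect ratio are uniformly bounded away from $0$ and $1$. This is simply false away from criticality. For any fixed $p<p_c$ and any fixed aspect ratio, the probability of an open crossing in the long direction decays exponentially in the scale, and dually for $p>p_c$. Self-duality at $p_c=1/2$ gives uniform bounds only at $p=p_c$ (or in the shrinking near-critical window $|p-p_c|\le O(r(n))$), not on the whole interval $[p_0,1-p_0]$ uniformly in $n$. Consequently, the ``auxiliary vertical crossings of squares of side $n$'' that you invoke in the second step to glue partial crossings via FKG do \emph{not} have probability bounded below by a constant independent of $n$ and $p$; for subcritical $p$ and large $n$ they are exponentially small. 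Your whole gluing scheme therefore collapses.

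The argument in \cite{Nol08} (going back to Kesten) avoids this by never appealing to absolute box-crossing lower bounds off criticality. Instead, the comparison between $\bbP_p[\calC_{\rm hor}([0,m]\times[0,2n])]$ and $\bbP_p[\calC_{\rm hor}([0,m]\times[0,n])]$ is obtained by conditioning on an extremal (e.g.\ lowest) crossing and exploiting that the region on one side of it is still unexplored, so that the needed auxiliary events can be bounded in terms of the very crossing probabilities appearing in the statement, rather than by constants. This is what makes the inequality hold uniformly over the full range $p\in[p_0,1-p_0]$ and over all $m,n$.
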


Note that we do not require that the parameter $p\in[0,1]$ is critical, but rather that it is bounded away from 0 and 1. For a proof of this version of the theorem on $\bbT$, we refer to \cite{Nol08}.
\bigbreak
Another important result, due to Garban, Pete and Schramm, will be required. Let $\calA_4(1,n)$ be the event that there exist four disjoint paths from neighbors of the origin to $\bbT\setminus [-n,n]^2$, indexed in the clockwise order by $\gamma_1,\gamma_2,\gamma_3$ and $\gamma_4$, with the property that $\gamma_1$ and $\gamma_3$ are open, while $\gamma_2$ and $\gamma_4$ are closed (meaning that they contain closed sites only). 
We set 
$$r(n)=\frac{\bbP_{p_c}[\calA_4(1,n)]}{n^2}.$$ 

For $r>0$ and $u\in\bbR^2$, let $\calB_r(u)=\{z\in \bbR^2:|z-u|\le r\}$ be the Euclidean ball of radius $r$ around $u$. For two sets $A$ and $B$ in $\bbR^2$, we say that $A\longleftrightarrow B$ if there exists $a\in A\cap \bbT$ and $b\in B\cap \bbT$ such that $a\longleftrightarrow b$.

\begin{proposition}\label{prop:inv}There exists $f:\bbR\times \bbR_+\rightarrow [0,1]$ such that for any $u\in \bbR^2$,
\begin{eqnarray}\label{eq:32} \bbP_{p_c-\lambda r(n)}\big[\calB_n(0)\longleftrightarrow \calB_n(nu)\big]&\longrightarrow &f(\lambda,|u|)
\end{eqnarray}
as $n$ tends to $\infty$. 
\end{proposition}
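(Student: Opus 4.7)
The plan is to deduce the proposition directly from the scaling-limit result of \cite{GPSa}. Rescale the lattice by a factor $1/n$ and take $p=p_c-\lambda r(n)$: the main theorem of \cite{GPSa} asserts that the resulting near-critical configuration converges in law, in the quad-crossing topology on $\bbR^2$, to a random limit $\omega_{-\lambda}^\infty$ whose distribution is invariant under the rigid motions of the plane (translations being essentially trivial, rotations being the deep content). The proposition will then be obtained as a direct packaging of this convergence together with the rotational invariance of $\omega_{-\lambda}^\infty$.

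The first step is to identify the rescaled event with a macroscopic connection event for the limit. After rescaling by $1/n$, the event $\{\calB_n(0)\longleftrightarrow\calB_n(nu)\}$ becomes the event that the two unit balls $\calB_1(0)$ and $\calB_1(u)$ are connected in the configuration. Using standard arm-exponent bounds near criticality (of the type already employed in \cite{GPSa}), one verifies that such a macroscopic connection event is a continuity point of the limiting law on quad crossings: the probability that the connection relies on vertices within distance $\delta$ of $\partial\calB_1(0)\cup\partial\calB_1(u)$ tends to $0$ as $\delta\to0$, uniformly in $n$, for $p=p_c-\lambda r(n)$. The weak convergence of the rescaled configurations then yields
\ben
\bbP_{p_c-\lambda r(n)}\big[\calB_n(0)\longleftrightarrow\calB_n(nu)\big]\longrightarrow g(\lambda,u):=\bbP\big[\calB_1(0)\stackrel{\omega_{-\lambda}^\infty}{\longleftrightarrow}\calB_1(u)\big].
\een

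The second step uses rotational invariance. For any rotation $R$ of $\bbR^2$ around the origin, $R_*\omega_{-\lambda}^\infty$ has the same law as $\omega_{-\lambda}^\infty$, and $R$ sends the connection event between $\calB_1(0)$ and $\calB_1(u)$ onto the connection event between $\calB_1(0)$ and $\calB_1(Ru)$; hence $g(\lambda,u)=g(\lambda,Ru)$. Choosing $R$ so that $Ru$ lies on the positive real axis, one concludes that $g(\lambda,u)$ depends on $u$ only through $|u|$, and one sets $f(\lambda,|u|):=g(\lambda,u)$.

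The only obstacle peculiar to this argument is the verification that the connection event between the two balls is a continuity point of the scaling-limit measure; this is however a routine arm-exponent estimate, of the sort already developed in \cite{GPSa}. The genuine difficulty, namely the existence of the near-critical scaling limit together with its rotational invariance, is exactly what is imported wholesale from \cite{GPSa}, so the present proposition is essentially a reformulation of that input in the form needed for the remainder of the paper.
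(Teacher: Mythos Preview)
Your proposal is correct and follows essentially the same route as the paper, which does not give a formal proof but explains that the proposition is a direct consequence of the quad-topology convergence of near-critical percolation to a rotationally invariant limit established in \cite{GPSa}. Your write-up is slightly more explicit than the paper's in isolating the continuity-of-the-event step, but the overall argument is the same.
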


The previous proposition has two interesting features. First, the quantity on the left possesses a limit as $n$ tends to infinity. Second, this limit is invariant under rotations.
For $\lambda=0$, the result follows from the convergence of percolation interfaces to $\mathsf{CLE}(6)$ \cite{CN06}. For more general values of $\lambda$, the result is much more difficult. Let us briefly explain where this proposition comes from.

The scaling limit described in \cite{GPS10a,GPSa} is a limit, in the sense of the Quad-topology introduced in \cite{SS11}, of percolation configurations $\bbP_{p_c-\lambda r(n)}$ on $\frac1n \bbT$. 
This topology is sufficiently strong to control events considered in Proposition~\ref{prop:inv}. The existence of the scaling limit is justified by a careful study of macroscopic ``pivotal points''. This existence implies the existence of the limit in \eqref{eq:32}. 
The scale $r(n)$ corresponds to the scale for which a variation of $\lambda r(n)$ will alter the pivotal points, and therefore the scaling limit, but not too drastically. This fact enabled Garban, Pete and Schramm to construct the scaling limit of near-critical percolation from the scaling limit of critical percolation. The invariance under rotation of the near-critical scaling limit is then a consequence of the invariance under rotation of the critical one. We refer to \cite{GPSa} for more details.

In the proof, the near-critical phase will be used at its full strength. On the one hand, the scaling limit is still invariant under rotations. On the other hand, as $\lambda\rightarrow \infty$, the ``crossing probabilities'' tend to 0. The existence of such a phase is crucial in our proof.

\section{Proof of Theorem~\ref{main theorem}}

The proof consists in estimating the correlation length $\tau_p(u)$ using $f(\lambda,|u|)$. It is known since \cite{Kes87} that the correlation length is related to crossing probabilities. Yet, previous studies were interested in relations which are only valid up to bounded multiplicative constants. Here, we will need a slightly better control (roughly speaking that these constants tend to 1 as $p$ goes to $p_c$). 

In order to relate $\tau_p(u)$ and $f(\lambda,|u|)$, we use the existence of different parameters $\delta,\lambda, p_0$ and $L_p$ with some specific properties presented in the next proposition.
\begin{proposition}\label{conditions}
Let $\ep>0$. There exist $\lambda,\delta>0$ and $p_0<p_c$ such that for any $p\in[p_0,p_c]$, there exists $L_p\ge 0$ with the following three properties:
\begin{enumerate}
\item[{\rm P1}] \quad for any $\theta\in[0,2\pi)$,
$$\displaystyle f(\lambda,\delta^{-1})^{1+\ep}~\le~ \bbP_{p}\big[\calB_{\delta L_p}(0)\longleftrightarrow \calB_{\delta L_p}(L_pe^{i\theta})\big]~\le~ f(\lambda,\delta^{-1})^{1-\ep},$$
\item[{\rm P2}] \quad$\displaystyle \bbP_{p}\big[\calC_{\rm circuit}(0,\delta L_p,2\delta L_p)\big]~\ge~ f(\lambda,\delta^{-1})^{\ep},$
\item[{\rm P3}] \quad$\displaystyle  \delta ~\ge~ 2f(\lambda,\delta^{-1})^{\ep}$.
\end{enumerate}
\end{proposition}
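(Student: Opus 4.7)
The plan is to choose $\delta>0$ and $\lambda>0$ depending only on $\epsilon$ so that P2 and P3 hold automatically, and then define $L_p$ as the scale relative to the near-critical window of parameter $\lambda$, so that Proposition~\ref{prop:inv} yields P1.

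First, I would fix a small $\delta>0$ and then take $\lambda>0$ large. As noted in the discussion following Proposition~\ref{prop:inv}, $f(\lambda,\delta^{-1})\to 0$ as $\lambda\to\infty$ for any fixed $\delta$, so $f(\lambda,\delta^{-1})^{\epsilon}\le \delta/2$ as soon as $\lambda$ is large enough, which gives P3. For P2, I would combine Theorem~\ref{main theorem} with the FKG inequality in the standard way: four open crossings of parallelograms of aspect ratio $4$, glued along their short sides, produce an open circuit in the annulus $[-2N,2N]^2\setminus[-N,N]^2$. This furnishes an absolute constant $c_{\rm RSW}>0$ such that
$$\bbP_{p}\big[\calC_{\rm circuit}(0,N,2N)\big]\ge c_{\rm RSW}$$
for every $p\in[p_0,1-p_0]$ and every sufficiently large integer $N$. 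Enlarging $\lambda$ so that in addition $f(\lambda,\delta^{-1})^\epsilon\le c_{\rm RSW}$ then gives P2.

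Once $\lambda$ and $\delta$ are fixed, for each $p<p_c$ close to $p_c$ I would define $L_p$ as an approximate integer solution of $\lambda\, r(\delta L_p)=p_c-p$, so that, setting $n:=\delta L_p$, one has $p = p_c - \lambda\, r(n)$ and $L_p\to\infty$ as $p\nearrow p_c$. Proposition~\ref{prop:inv}, applied with this $n$ and with $u=\delta^{-1}e^{i\theta}$, then yields
$$\bbP_{p}\big[\calB_{\delta L_p}(0)\longleftrightarrow\calB_{\delta L_p}(L_p e^{i\theta})\big] \longrightarrow f\big(\lambda,\delta^{-1}\big)\quad\text{as }p\nearrow p_c,$$
for each fixed $\theta\in[0,2\pi)$. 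Since $f(\lambda,\delta^{-1})\in(0,1)$ (positivity coming from RSW applied to near-critical percolation), the two-sided bound in P1 holds for this $\theta$ once $p_0$ is taken close enough to $p_c$.

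The main obstacle is to upgrade this pointwise convergence to uniform convergence in $\theta\in[0,2\pi)$, which is what P1 actually demands. I would proceed in two steps: first, convergence is automatic along any finite $\delta'$-net $\{\theta_1,\dots,\theta_k\}\subset[0,2\pi)$; second, I would prove an equicontinuity estimate saying that for $|\theta-\theta_i|\le\delta'$ the probabilities at the directions $\theta$ and $\theta_i$ agree up to a correction that vanishes as $\delta'\downarrow 0$, uniformly in $p\in[p_0,p_c]$. A natural way to produce such an estimate uses RSW and FKG: the target balls $\calB_{\delta L_p}(L_p e^{i\theta})$ and $\calB_{\delta L_p}(L_p e^{i\theta_i})$ lie inside a common bounded-aspect-ratio box, and one links them through an open crossing whose probability is uniformly bounded from below. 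Alternatively, the Quad-topology convergence established in \cite{GPSa} is itself uniform over compact families of quads, and phrasing the ball-to-ball connection event as a quad crossing that depends continuously on $\theta$ yields uniformity directly. Modulo this equicontinuity step, the rest of the argument is parameter bookkeeping between $\delta$, $\lambda$, $p$ and $L_p$.
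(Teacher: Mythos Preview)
Your argument for P2 contains a genuine error. You claim that Theorem~\ref{main theorem} together with FKG furnishes an \emph{absolute} constant $c_{\rm RSW}>0$ with
\[
\bbP_{p}\big[\calC_{\rm circuit}(0,N,2N)\big]\ge c_{\rm RSW}\qquad\text{for all }p\in[p_0,1-p_0]\text{ and all large }N.
\]
This is false. The Russo--Seymour--Welsh theorem only compares crossing probabilities of rectangles with different aspect ratios; it never produces a uniform lower bound valid away from criticality. For any fixed $p<p_c$, crossing and circuit probabilities decay exponentially in $N$. At the relevant scale $N=\delta L_p$, where $p=p_c-\lambda r(\delta L_p)$, the circuit probability is a function of $\lambda$ that tends to $0$ as $\lambda\to\infty$. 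Hence your plan ``enlarge $\lambda$ until $f(\lambda,\delta^{-1})^\ep\le c_{\rm RSW}$'' collapses: as $\lambda\to\infty$ both sides of the desired inequality in P2 go to zero, and there is no fixed target to aim for.

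This is exactly the difficulty the paper isolates and resolves. The point is that, at scale $n=\delta L_p$, the circuit probability and $f(\lambda,\delta^{-1})$ decay at \emph{different rates} in the basic crossing probability $x:=\bbP_p[\calC_{\rm hor}([0,n]\times[0,2n])]$. Lemma~\ref{lem:4} gives $\bbP_p[\calC_{\rm circuit}(n,2n)]\ge c_3 x^{c_4}$, a polynomial lower bound, while Lemma~\ref{lem:2} gives $\bbP_p[\calB_n(0)\leftrightarrow\calB_n(n/\delta)]\le (c_5x)^{c_6/\delta}$, an exponential-in-$\delta^{-1}$ upper bound. One therefore fixes a small threshold $\mu$, chooses $\delta$ small so that $(c_5\mu)^{c_6/\delta}$ is dominated by $c_3\mu^{c_4}$ (and by $\delta$) to the required $\ep$-power, and only \emph{then} chooses $\lambda$ large (Lemma~\ref{lem:8}) to force $x\le\mu$. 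In other words, P2 is obtained not by a uniform RSW bound but by a rate comparison that survives the simultaneous degeneration of both sides. Your handling of P3 and P1 (including the equicontinuity/Quad-topology route to uniformity in $\theta$) is reasonable; the gap is the missing rate comparison behind P2.
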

The main part of the proof of Theorem~\ref{main theorem} will be to show Proposition~\ref{conditions}, i.e. that for any $\ep>0$, the constants $\lambda,\delta, p_0$ and $L_p$ can indeed be constructed. Before proving Proposition~\ref{conditions}, let us show how it implies Theorem~\ref{main theorem}.
\begin{proof}[Theorem~\ref{main theorem}]
Fix $\ep>0$. Define $\lambda,\delta>0$ and $p_0<p_c$ such that Proposition~\ref{conditions} holds true. Let $\theta\in[0,2\pi)$ and $p_0<p<p_c$. Consider $L_p$ defined as in the proposition.

For $K\ge 1$, consider the following three events:
\begin{enumerate}
\item[$\calE_1$=]``$\calB_{\delta L_p}(0)$ and $\calB_{\delta L_p}(KL_pe^{i\theta})$ are full '',
\item[$\calE_2$=]`` $\calB_{\delta L_p}(kL_p{\rm e}^{i\theta})\longleftrightarrow \calB_{\delta L_p}((k+1)L_p{\rm e}^{i\theta})$ for every $0\le k\le K$ '',
\item[$\calE_3$=]`` $\calC_{\rm circuit}(kL_p{\rm e}^{i\theta},\delta L_p,2\delta L_p)$ for every $0\le k\le K$ ''.
\end{enumerate}
As shown on Fig.~\ref{fig:events}, if all these events occur, then $0$ and the site of $\bbT$ closest to $KL_p{\rm e}^{i\theta}$, denoted $\widehat{KL_p{\rm e}^{i\theta}}$, are connected by an open path. The FKG inequality (see \cite[Theorem 2.4]{Gri99}) implies that \begin{align*}\bbP_{p}\big[0\longleftrightarrow \widehat{KL_pe^{i\theta}}\big]&\ge \bbP_{p}\left[\calE_1\right]\bbP_{p}\left[\calE_2\right]\bbP_{p}\left[\calE_3\right]\\
&\ge p^{8(\delta L_p)^2}\cdot f(\lambda,\delta^{-1})^{(1+\ep)K}\cdot f(\lambda,\delta^{-1})^{\ep K}.\end{align*}
We have used P1 and P2 to bound the probabilities of $\calE_2$ and $\calE_3$ in the second inequality. The bound on $\bbP_p[\calE_1]$ comes from the fact that there are less than $8(\delta L_p)^2$ sites in $\calB_{\delta L_p}(0)\cup\calB_{\delta L_p}(KL_pe^{i\theta})$.

By taking the logarithm and letting $K$ tend to infinity, we obtain that for $p_0<p<p_c$,
\be{lower}\frac1{\tau_{p}(e^{i\theta})}\le -(1+2\ep)\frac{\log f(\lambda,\delta^{-1})}{L_p}.\ee
This provides us with an upper bound that we will match with the lower bound below. 
\bigbreak
Let us now turn to the lower bound. Assume that $0$ and $\widehat{KL_p{\rm e}^{i\theta}}$ are connected. Let $0\le N\le 2\delta^{-1}$ such that $\sin (\frac{2\pi}{N})<\delta/2$. Define
$$\Theta=\big\{L_p,L_pe^{\frac{2\pi i}{N} },L_pe^{2\frac{2\pi i}{N}} ,\dots, L_pe^{(N-1)\frac{2\pi i}{N}} \big\}.$$ 
We claim that if $0$ and $KL_p{\rm e}^{i\theta}$ are connected, then there must exist a sequence of sites $0=x_0,x_1,\dots,x_K$ such that $x_{i+1}-x_i\in \Theta$ and $\calB_{\delta L_p}(x_i)\longleftrightarrow \calB_{\delta L_p}(x_{i+1})$ occurs for every $0\le i<K$. Furthermore, the events $\calB_{\delta L_p}(x_i)\longleftrightarrow \calB_{\delta L_p}(x_{i+1})$ occur disjointly in the sense of \cite[Section 2.3]{Gri99}.

In order to prove this claim, consider a self-avoiding open path $\gamma=(\gamma_i)_{0\le i\le r}$ from $0$ to $\widehat{KL_p{\rm e}^{i\theta}}$. Let $y_1$ be the first point of this path which is outside of the Euclidean ball of radius $L_p$ around 0. Define $x_1\in \Theta$ such that $|y_1-x_1|\le \delta L_p$. The choice of $N$ guarantees the existence of $x_1$. Let $y_2$ be the first point of $\gamma[y_1,r]$ outside of the Euclidean ball of radius $L_p$ around $x_1$. We pick $x_2$ such that $x_2-x_1\in \Theta$ and $|y_2-x_2|\le \delta L_p$. We construct $(x_i)_{0\le i\le K}$ iteratively. See Fig.~\ref{fig:events} for an illustration. By construction, the events occur {\em disjointly}  since the path $\gamma$ is self-avoiding. We note $A\circ B$ for the disjoint occurrence (see \cite[Theorem 2.12]{Gri99}). The union bound and the BK inequality give
\begin{align*}\bbP_p[0\longleftrightarrow \widehat{KL_pe^{i\theta}})]&\le \sum_{(x_i)_{i\le K}}\bbP_p\Big[\big\{\calB_{\delta L_p}(0)\longleftrightarrow \calB_{\delta L_p}(x_{1})\big\}\circ\cdots\\
&\quad\quad\quad\quad\quad\quad\quad\quad \cdots\circ\big\{\calB_{\delta L_p}(x_{j-1})\longleftrightarrow \calB_{\delta L_p}(x_{j})\big\}\Big]\\
&\le\left(\frac{2}{\delta}\right)^K\bbP_p[\calB_{\delta L_p}(0)\longleftrightarrow \calB_{\delta L_p}(x_{1})]^K\\
&\le\big(\frac2{\delta}f(\lambda,\delta^{-1})^{1-\ep}\big)^K\le f(\lambda,\delta^{-1})^{(1-2\ep)K}.\end{align*}
In the second inequality, we used the fact that the cardinality of $\Theta$ is bounded by $2\delta^{-1}$. In the last line, we used P1 and then P3. By taking the logarithm and letting $K$ go to infinity, we obtain that for $p_0<p<p_c$\be{upper}\frac1{\tau_{p}(e^{i\theta})}\ge -(1-2\ep)\frac{\log f(\lambda,\delta^{-1})}{L_p}.\ee
Therefore, for every $p_0<p<p_c$, \eqref{lower} and \eqref{upper} imply that $1-4\ep<\tau_p(e^{i\theta})/\tau_p(1)\le 1+5\ep$ for any $\theta\in[0,2\pi)$.\end{proof}

\begin{figure}

\begin{center}
\includegraphics[width=1.00\textwidth]{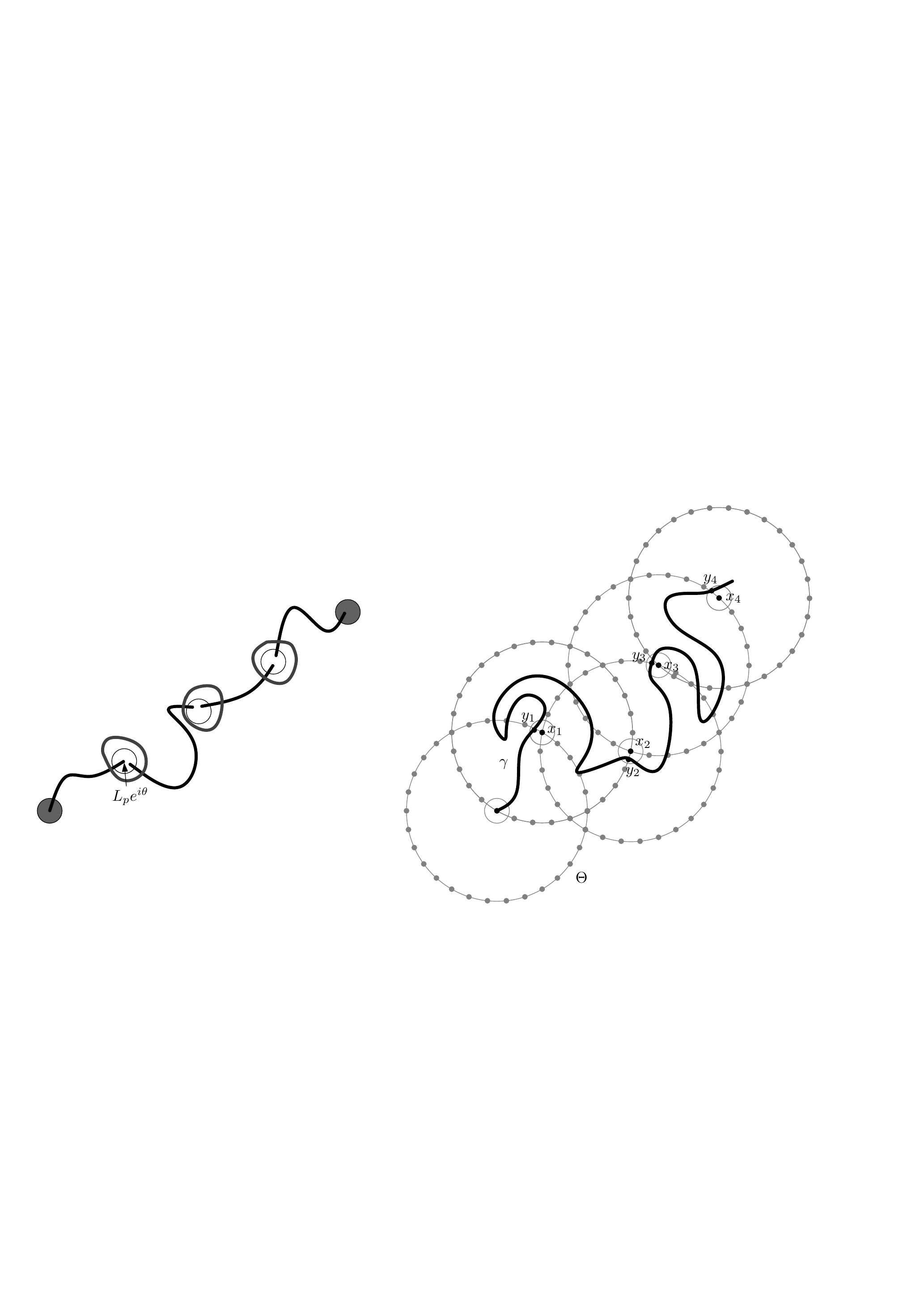}\end{center}
\caption{\label{fig:events}On the left, the events $\calE_1$, $\calE_2$ and $\calE_3$. On the right, the construction corresponding to the upper bound.}
\end{figure}

We now prove Proposition~\ref{conditions}. Property P1 will be guaranteed by the fact that we set $L_p$ in such a way that 
\begin{equation}\label{eq:def}p=p_c-\lambda r(\delta L_p)\end{equation} (we should be careful about the rounding operation since $r$ takes only discrete values, but this is easily shown to be irrelevant). P1 therefore follows from Proposition~\ref{prop:inv} and any choice of $\delta,\lambda$ by setting $L_p$ as in \eqref{eq:def}. The only mild condition is that $L_p$ is large enough, or equivalently that $p$ is close enough to $p_c$. 

From now on, we will assume that $L_p$ is chosen as in \eqref{eq:def}. We need to explain how to choose $\delta$ and $\lambda$ in such a way that P2 and P3 are also satisfied.
Since properties P2 and P3 are conditions on the limit $f(\lambda,\delta^{-1})$, the proof boils down to finding $\lambda$ and $\delta$ in such a way that $f(\lambda,\delta^{-1})$ is small enough compared to $\bbP_{p}\big[\calC_{\rm circuit}(\delta L_p,2\delta L_p)\big]$ and $\delta$. 
\begin{remark} Taking $\delta$ small enough is not sufficient since in such case, $f(\lambda,\delta^{-1})$ could a priori become larger than $\delta/2$. This is what is happening when $\lambda=0$ since this quantity tends to zero as $\delta^{10/48}$. We will prove that for $\lambda$ large enough, $f(\lambda,\delta^{-1})$ tends to zero (as $\delta$ tends to zero) exponential fast in $\delta^{-1}$.
\end{remark}
Let us start by two simple lemmata. 
\begin{lemma}\label{lem:1}
  Fix $p\in (0,1)$ and $n,k>0$. Then,
  $$28 \bbP_p\big[\calC_{\rm hor}([0,2^{k}n]\times[0,2^{k+1}n])\big]~\le~ \Big(28\,\bbP_p\big[\calC_{\rm hor}([0,n]\times[0,2n])\big]\Big)^{2^{k}}.$$\end{lemma}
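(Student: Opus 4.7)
}

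The plan is to reduce the lemma to the doubling estimate
\ben
u_{k+1}\;\le\;28\, u_k^2,\qquad\text{where}\qquad u_k\;:=\;\bbP_p\bigl[\calC_{\rm hor}([0,2^k n]\times[0,2^{k+1}n])\bigr].
\een
Given this, a routine induction on $k$ yields $28\,u_k\le(28\,u_0)^{2^k}$: the base case $k=0$ is tautological, and the inductive step gives $28\,u_{k+1}\le 28\cdot 28\,u_k^2=(28\,u_k)^2\le(28\,u_0)^{2^{k+1}}$.

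To prove the doubling estimate, I set $a:=2^k n$, so that $R_{k+1}=[0,2a]\times[0,4a]$ and $R_k=[0,a]\times[0,2a]$, and use the BK inequality. Splitting $R_{k+1}$ along the vertical line $m=a$ into a left half $[0,a]\times[0,4a]$ and a right half $[a,2a]\times[0,4a]$, any horizontal crossing of $R_{k+1}$ has a sub-path in each half that realises a horizontal crossing of that half, and the two sub-paths use disjoint sets of sites; BK then yields $u_{k+1}\le w_k^2$, where $w_k:=\bbP_p\bigl[\calC_{\rm hor}([0,a]\times[0,4a])\bigr]$. To bound $w_k$ in terms of $u_k$, I decompose each horizontal crossing of the tall strip $[0,a]\times[0,4a]$ by the length-$a$ sub-segment of the left edge $\{0\}\times[0,4a]$ at which it enters and the length-$a$ sub-segment of the right edge at which it exits. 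A union bound over these sixteen pairs, together with monotonicity of crossing events and, for pairs with large vertical offset, a further BK split of the forced vertical connection at its horizontal midline (which controls such a vertical crossing by a product of two square crossings, each dominated by $u_k$), combines to give the target $w_k^2\le 28\,u_k^2$.

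The main obstacle is the tight bookkeeping that produces exactly the constant $28$: one has to track the contributions of the sixteen segment-to-segment terms and exploit the extra decay from large vertical offsets, while absorbing the slight boundary-sharing issue at the line $m=a$ (when applying BK to $R_{k+1}$, the two disjoint sub-paths may a priori share sites on that line) into the same constant. All remaining ingredients are standard: monotonicity of crossing events (an open crossing of a sub-rectangle extends to the ambient rectangle), finite union bounds, and the BK and FKG inequalities.
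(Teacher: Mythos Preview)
Your reduction to the doubling estimate $u_{k+1}\le 28\,u_k^2$ and the induction on $k$ are correct and coincide with the paper. The gap is in your argument for the doubling estimate itself.

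After splitting at $m=a$ you get $u_{k+1}\le w_k^2$, so you would need $w_k\le\sqrt{28}\,u_k$. Your entry/exit decomposition does not give this. Fixing the entry segment on the left and the exit segment on the right of a crossing of $[0,a]\times[0,4a]$ does \emph{not} confine the path to any height-$2a$ sub-box; it may wander through the whole strip, so the resulting event is not contained in any translate of $\calC_{\rm hor}([0,a]\times[0,2a])$, and ``monotonicity'' yields nothing here. For large vertical offset you invoke square crossings ``each dominated by $u_k$'', but a crossing of an $a\times a$ square is \emph{easier} than the hard crossing of an $a\times 2a$ box, so its probability is at least $u_k$, not at most; the inequality points the wrong way. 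Once the box has been cut at $m=a$, the pieces you are left with (crossings of $a\times 4a$ strips, square crossings) are simply not controlled from above by $u_k$, and the constant $28$ cannot be recovered along these lines.

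The paper does not split first. It places eight sub-parallelograms $P_1,\dots,P_8$ inside $[0,2n]\times[0,4n]$: six translates of $[0,n]\times[0,2n]$ covering the left and right halves, together with the two \emph{full-width} strips $P_7=[0,2n]\times[n,2n]$ and $P_8=[0,2n]\times[2n,3n]$. Any horizontal crossing of the big box then contains disjoint short-direction crossings of at least two of the $P_i$, and BK with a union bound gives $u_{k+1}\le\binom{8}{2}\,u_k^2=28\,u_k^2$ directly. The full-width strips are the point: by the lattice symmetry exchanging the two oblique axes, their short-direction crossing has exactly probability $u_k$, and this is precisely the ingredient that your preliminary cut at $m=a$ destroys.
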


Even though this fact is classical, the proof is elegant and short therefore we choose to include it here.
\begin{proof}
  Let $n>0$ and consider the parallelograms $P_1,\dots,P_8$ defined by
  \begin{center}
  \begin{tabular}{ll}
  $P_1=[0,n]\times[0,2n]$,\quad\quad & $P_2=[0,n]\times[n,3n]$, \\
  $P_3=[0,n]\times[2n,4n]$,\quad\quad & $P_4=[n,2n]\times[0,2n]$,\\
  $P_5=[n,2n]\times[n,3n]$,\quad\quad & $P_6=[n,2n]\times[2n,4n]$,\\
  $P_7=[0,2n]\times[n,2n]$,\quad\quad & $P_8=[0,2n]\times[2n,3n]$.
  \end{tabular}
  \end{center}
  These parallelograms have the property that whenever
  there exists an open path from $\{0\}\times[0,4n]$ to $\{2n\}\times[0,4n]$, then there exist (at least) two disjoint paths crossing parallelograms $P_i$. In other words, the event $\calC_{\rm hor}([0,2n]\times[0,4n])$ is included in the event that two of the eight parallelograms $P_i$ contain an open path crossing in the easy direction (meaning vertically if the base is larger than the height, and horizontally if the reverse is true), and that these events occur disjointly.
The BK inequality (see \cite[Theorem 2.12]{Gri99}) implies that 
  \begin{align*}
    \bbP_p\big[\calC_{\rm hor}([0,2n]\times[0,4n])\big] & \le
    \binom 8 2~\bbP_p\big[\calC_{\rm hor}([0,n]\times[0,2n])\big]^2\\
    &=28\,\bbP_p\big[\calC_{\rm hor}([0,n]\times[0,2n])\big]^2.
  \end{align*}
The lemma follows by applying this inequality iteratively to integers of the form $2^jn$ for $0\le j\le k-1$.
\end{proof}

The next lemma is a standard application of the RSW theorem. We do not remind the proof here.
\begin{lemma}\label{lem:4}
There exist $0<p_0<p_c$ and $c_3,c_4>0$ such that for any $p\in[p_0,1-p_0]$ and $n\ge 0$,
$$\bbP_{p}\big[\calC_{\rm circuit}(n,2n)\big]\ge c_3\,\bbP_p\big[\calC_{\rm hor}([0,n]\times[0,2n])\big]^{c_4}.$$
\end{lemma}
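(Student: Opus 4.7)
\textbf{Proof proposal for Lemma~\ref{lem:4}.}

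The plan is to build a circuit around the origin as the union of four open crossings, one in each of four thin parallelograms placed along the four sides of the annulus $[-2n,2n]^2\setminus[-n,n]^2$, then use FKG to factor the probability and RSW to lower-bound each crossing probability by a polynomial of $q:=\bbP_p[\calC_{\rm hor}([0,n]\times[0,2n])]$.

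First I introduce the four parallelograms $R_1=[-2n,2n]\times[n,2n]$, $R_2=[-2n,2n]\times[-2n,-n]$, $R_3=[-2n,-n]\times[-2n,2n]$ and $R_4=[n,2n]\times[-2n,2n]$, all contained in the annulus. Each pair of cyclically consecutive $R_i$'s meets in an $n\times n$ corner parallelogram, and if $R_1,R_2$ are crossed horizontally and $R_3,R_4$ are crossed vertically, then by planarity any horizontal and vertical crossing of the same $n\times n$ corner must share at least one open site, so the union of the four crossings contains an open circuit surrounding the origin. Each such crossing is an increasing event; FKG therefore gives
$$\bbP_p[\calC_{\rm circuit}(n,2n)]\ge \bbP_p[\calC_{\rm hor}([0,4n]\times[0,n])]^2\cdot \bbP_p[\calC_{\rm ver}([0,n]\times[0,4n])]^2.$$
The triangular lattice is invariant under the Euclidean reflection about the line of argument $\pi/6$, which swaps the two oblique basis vectors; under this reflection horizontal and vertical crossings are interchanged and parallelogram dimensions are transposed, so the two factors on the right coincide and
$$\bbP_p[\calC_{\rm circuit}(n,2n)]\ge \bbP_p[\calC_{\rm hor}([0,4n]\times[0,n])]^4.$$

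For the second step I use the standard RSW bootstrap. Theorem~\ref{main theorem} applied with $m=n$ gives $\bbP_p[\calC_{\rm hor}([0,n]\times[0,n])]\ge (q/c_1)^{1/c_2}$, and by the same lattice reflection the vertical crossing of the unit square satisfies the same bound. The Seymour--Welsh gluing trick, which combines two overlapping horizontal square crossings via a vertical crossing of their common $n\times n$ strip using FKG, then produces a horizontal crossing of $[0,\tfrac32 n]\times[0,n]$ whose probability is polynomial in $q$; one or two further doublings, each coupled with an application of Theorem~\ref{main theorem} to control the taller auxiliary crossings that appear, yield
$$\bbP_p[\calC_{\rm hor}([0,4n]\times[0,n])]\ge c' q^{c''}$$
for constants $c',c''>0$ depending only on $c_1,c_2$. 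Inserting this into the FKG bound of the previous step gives the claim with $c_3=(c')^4$ and $c_4=4c''$, uniformly in $p\in[p_0,1-p_0]$.

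The main obstacle is the RSW bootstrap above: Theorem~\ref{main theorem} provides a single easy-to-hard inequality at one aspect ratio, and upgrading it to a polynomial lower bound on the hard crossing of a rectangle of arbitrary fixed aspect ratio requires several careful applications of FKG, BK, and the square-root trick along a sequence of doublings. The triangular lattice's lack of 90° rotational symmetry poses no extra difficulty, thanks to the oblique-basis reflection noted above, which polynomially relates horizontal and vertical crossings throughout.
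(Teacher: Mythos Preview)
The paper omits the proof of this lemma entirely, calling it ``a standard application of the RSW theorem''; your proposal supplies precisely that standard argument (four long crossings glued by FKG to build the circuit, then RSW plus the oblique reflection symmetry to bound each long crossing polynomially in $q$), and it is correct. A couple of minor cosmetic points: your four parallelograms $R_i$ touch the inner square along its boundary, so strictly speaking you should shift them by one lattice step to sit inside the annulus, and the overlap in your gluing is $(n/2)\times n$ rather than $n\times n$---neither affects the argument.
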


The previous lemma provides us with a lower bound on $\bbP_{p}\big[\calC_{\rm circuit}(n,2n)\big]$ in terms of $\bbP_p\big[\calC_{\rm hor}([0,n]\times[0,2n])\big]$, while the next lemma provides us with an upper bound on $\bbP_{p}\big[\calB_n(0)\longleftrightarrow \calB_n(x)\big]$ in terms of the same quantity.
\begin{lemma}\label{lem:2}
There exist $0<p_0<p_c$ and $c_5,c_6>0$ such that for any $p\in[p_0,1-p_0]$, for any $n \ge 0$ and $\delta>0$,
$$\bbP_{p}\big[\calB_n(0)\longleftrightarrow \calB_n(n/\delta)\big]\le \Big(c_5\,\bbP_p\big[\calC_{\rm hor}([0,n]\times[0,2n])\big]\Big)^{c_6/\delta}.$$\end{lemma}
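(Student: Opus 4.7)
The plan is to reduce the ball-to-ball connection event to a horizontal crossing of a rectangle of aspect ratio $1{:}2$ via an FKG/RSW upgrade, and then apply Lemma~\ref{lem:1} directly. Set $L=n/\delta$, and (using the bounded rotational symmetry of $\bbT$, with any angular error absorbed into the final constant $c_5$) assume the second ball is centred at $(L,0)$. Let $k=\lceil\log_2(1/\delta)\rceil$ so that $M:=2^kn\ge L$, and consider the rectangle $R:=[0,M]\times[-M,M]$, which comfortably contains both $\calB_n(0)$ and $\calB_n(L)$.

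Introduce the increasing auxiliary events $A_{\rm left}$ and $A_{\rm right}$: open paths inside $R$ from $\calB_n(0)$ (resp.\ $\calB_n(L)$) to the left (resp.\ right) vertical edge of $R$. Theorem~\ref{main theorem} gives $\bbP_p[A_{\rm left}],\bbP_p[A_{\rm right}]\ge c>0$ for a constant $c$ depending only on $p_0$. Whenever $\calB_n(0)\leftrightarrow\calB_n(L)$ is realised by a path inside $R$ and both auxiliary events hold, concatenation produces a horizontal crossing of $R$; since all three events are increasing, FKG yields $\bbP_p[\calB_n(0)\leftrightarrow\calB_n(L)\text{ via a path}\subset R]\le c^{-2}\bbP_p[\calC_{\rm hor}(R)]$. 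Lemma~\ref{lem:1} applied at scale $k$ then gives
\[
\bbP_p[\calC_{\rm hor}(R)]=\bbP_p\big[\calC_{\rm hor}([0,2^kn]\times[0,2^{k+1}n])\big]\le \tfrac{1}{28}(28q)^{2^k}\le \tfrac{1}{28}(28q)^{1/\delta},
\]
at least in the regime $28q\le 1$; otherwise $(c_5q)^{c_6/\delta}\ge 1$ makes the conclusion trivial, so one may take $c_5$ large enough that this case requires no work.

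The main obstacle is controlling connecting paths that leave $R$ vertically. On that event, the cluster of $\calB_n(0)$ reaches both $\{|y|\ge M\}$ and $\calB_n(L)$. Splitting the path at its first hit of $\{|y|=M\}$ and applying the BK inequality (after summing over the shared hitting site at polynomial-in-$M$ cost, which is absorbed into the constants) gives
\[
\bbP_p[\calB_n(0)\leftrightarrow\calB_n(L),\text{ path leaves }R]\le C\,\bbP_p[\calB_n(0)\leftrightarrow\{|y|\ge M\}]\cdot\bbP_p[\calB_n(0)\leftrightarrow\calB_n(L)].
\]
A standard RSW one-arm estimate (possibly after replacing $k$ by $k+k_0$ for a fixed $k_0$) ensures $\bbP_p[\calB_n(0)\leftrightarrow\{|y|\ge M\}]\le 1/(2C)$, so the exit contribution can be absorbed into the left-hand side, leaving
\[
\bbP_p[\calB_n(0)\leftrightarrow\calB_n(L)]\le 2c^{-2}\bbP_p[\calC_{\rm hor}(R)]\le(c_5q)^{c_6/\delta}
\]
for suitable $c_5,c_6>0$ depending only on $p_0$. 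Technically the subtle point is the BK step in the exit case (shared hitting site, plus ensuring the one-arm is genuinely $\le 1/2$ for $p$ throughout $[p_0,1-p_0]$); an alternative cleaner route is a dyadic decomposition over the maximum vertical excursion of the path, running the FKG/Lemma~\ref{lem:1} argument at each scale and summing the resulting geometric series.
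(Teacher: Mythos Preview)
Your argument has a genuine gap in the FKG step. You claim that Theorem~\ref{main theorem} gives $\bbP_p[A_{\rm right}]\ge c$ for a constant $c$ depending only on $p_0$, but this is false precisely in the regime where the lemma has content. The right edge of $R$ sits at $x=M$ with $M\in[L,2L]$, so $A_{\rm right}$ asks for an open arm from $\calB_n(L)$ across a gap of length $M-L$, which can be as large as $L=n/\delta$. When $q=\bbP_p[\calC_{\rm hor}([0,n]\times[0,2n])]$ is small (the only case where the conclusion is not trivially $\ge 1$), arms at scale $L$ are themselves exponentially small in $1/\delta$, so the factor $c^{-2}$ you divide by blows up and the inequality $\bbP_p[A]\le c^{-2}\bbP_p[\calC_{\rm hor}(R)]$ gives nothing. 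Note also that Theorem~\ref{main theorem} as stated is an \emph{upper} bound relating crossings of different aspect ratios; it does not furnish uniform lower bounds on crossings away from criticality. The exit-case treatment is similarly shaky: the BK split you describe does not produce the factor $\bbP_p[\calB_n(0)\leftrightarrow\calB_n(L)]$ on the right, and the ``standard RSW one-arm estimate'' you invoke again requires a uniform lower bound on \emph{closed} crossings, which you have not secured (and which fails for $p>p_c$).

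The paper's route avoids all of this by never embedding the event in a rectangle. It simply observes that $\calB_n(0)\leftrightarrow\calB_n(n/\delta)$ forces an open arm from $[-n,n]^2$ to the complement of $[-2^{k-2}n,2^{k-2}n]^2$ (where $2^k\le\delta^{-1}<2^{k+1}$), and then factors this one-arm event over the $k-2$ concentric dyadic annuli by \emph{independence}. Each annulus crossing is bounded above by four rectangle crossings via a union bound, and those are controlled by RSW together with Lemma~\ref{lem:1}. No lower bounds on open crossings are ever needed, and there is no ``path leaves the box'' case to handle.
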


\begin{proof}
For $n<m$, let $\calC_{\rm in/out}(n,m)$ be the event that there exists an open path between $[-n,n]^2$ and $\bbT\setminus [-m,m]^2$.

The RSW theorem implies that
$$\bbP_p\big[\calC_{\rm hor}([0,n]\times[0,4n])\big]\le c_1\bbP_p\big[\calC_{\rm hor}([0,n]\times[0,2n])\big]^{c_2}.$$
Since one of the four parallelograms $[-2n,2n]\times[-2n,-n]$, $[-2n,2n]\times[n,2n]$, $[-2n,-n]\times[-2n,2n]$ and $[n,2n]\times[-2n,2n]$ must be crossed if the annulus $[-2n,2n]^2\setminus[-n,n]^2$ contains an open path from the interior to the exterior, we obtain 
$$\bbP_p\big[\calC_{\rm in/out}(n,2n)\big]\le 4c_1\bbP_p\big[\calC_{\rm hor}([0,n]\times[0,2n])\big]^{c_2}.$$
We find immediately that for any $k>0$,
\begin{align*}\bbP_p\big[\calC_{\rm in/out}(n,2^kn)\big]&\le \prod_{j=0}^{k-1} \bbP_p\big[\calC_{\rm in/out}(2^jn,2^{j+1}n)\big]\\
&\le \prod_{j=0}^{k-1}4c_1\bbP_p\big[\calC_{\rm hor}([0,2^jn]\times[0,2^{j+1}n])\big]^{c_2}\\
&\le \prod_{j=0}^{k-1}\frac{4c_1}{28}\Big(28\bbP_p\big[\calC_{\rm hor}([0,n]\times[0,2n])\big]\big)^{2^j}\Big)^{c_2}\\
&\le \Big(c_7\,\bbP_p\big[\calC_{\rm hor}([0,n]\times[0,2n])\big]\Big)^{c_22^k}\end{align*}
where $c_7>0$ is large enough. We have used independence in the first inequality. In the third inequality, we used Lemma~\ref{lem:1}.
\medbreak
Let $2^k\le \delta^{-1}<2^{k+1}$.  Since 
$$\big[-2^{k-2}n,2^{k-2}n\big]^2\setminus\big[-n,n\big]^2\subset \calB_{n/(2\delta)}(0)\setminus \calB_n(0),$$ we immediately get
\begin{align*}\bbP_p\big[\calB_n(0)\longleftrightarrow \calB_n(n/\delta)\big]&\le \bbP_p\big[\calC_{\rm in/out}(n,2^{k-2}n)\big]\\
&\le \Big(c_7\,\bbP_p\big[\calC_{\rm hor}([0,n]\times[0,2n])\big]\Big)^{c_22^{k-2}}\\
&\le \Big(c_7\,\bbP_p\big[\calC_{\rm hor}([0,n]\times[0,2n])\big]\Big)^{8c_2/\delta}\end{align*}
and the claim follows.\end{proof}

Now, fix $\mu=\mu(c_3,c_4,c_5)>0$ small enough that $c_5\mu\le \mu^{1/2}\le 1/2$ and $\mu^{c_4}\le c_3$.
For any $\ep>0$, the two previous lemmata show the existence of $\delta=\delta(\mu,\ep)>0$ such that  $$\bbP_p\big[\calC_{\rm hor}([0,n]\times[0,2n])\big]\le \mu$$ implies that
$$\begin{cases}\delta\ge \bbP_p\big[\calB_n(0)\longleftrightarrow \calB_n(n/\delta)\big]^{\ep/(1-\ep)},&\,\\
\bbP_{p}\big[\calC_{\rm circuit}(n,2n)\big]\ge\bbP_p\big[\calB_n(0)\longleftrightarrow \calB_n(n/\delta)\big]^{\ep/(1-\ep)}.&\, \end{cases}$$
In particular, if the previous implication can be applied with $p$ and $L_p$ such that $p=p_c-\lambda r(\delta L_p)$ (for a well-chosen $\lambda$), then P1, P2 and P3 are satisfied and Proposition~\ref{conditions} is proved. Therefore, it remains to prove that there exists $\lambda=\lambda(\mu,\delta)>0$ such that
$$\bbP_p\big[\calC_{\rm hor}([0,\delta L_p]\times[0,2\delta L_p])\big]\le \mu$$
for $p$ close enough to $p_c$, or equivalently, such that
\begin{equation}\label{ert} \limsup_{n\rightarrow \infty}\,\bbP_{p_c-\lambda r(n)}\big[\calC_{\rm hor}([0,n]\times[0,2n])\big]<\mu\end{equation}
for $n$ large enough.
This is the object of our last lemma, which concludes the proof of Proposition~\ref{conditions}. 

\begin{lemma}\label{lem:8}
Let $\mu>0$. There exists $\lambda>0$ such that,
\begin{equation} \limsup_{n\rightarrow \infty}\,\bbP_{p_c-\lambda r(n)}\big[\calC_{\rm hor}([0,n]\times[0,2n])\big]<\mu.\end{equation}
\end{lemma}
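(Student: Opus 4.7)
The plan is to argue by contradiction: assume that for every $\lambda>0$,
$$\limsup_{n\to\infty}\bbP_{p_c-\lambda r(n)}\big[\calC_{\rm hor}([0,n]\times[0,2n])\big]\ge \mu,$$
and derive a contradiction via Proposition~\ref{prop:inv}. Fix $\lambda>0$ and extract a subsequence $(n_k)$ along which this crossing probability is at least $\mu/2$. Since $r(n)\to 0$, the parameter $p=p_c-\lambda r(n_k)$ lies in $[p_0,1-p_0]$ for all $k$ large enough, so the RSW machinery is available uniformly along the subsequence.

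The first step is a standard RSW--FKG argument: using Theorem~\ref{main theorem} together with the FKG inequality, one upgrades the lower bound $\mu/2$ on the easy-direction crossing of $[0,n_k]\times[0,2n_k]$ to a lower bound on the hard-direction crossing of a long parallelogram $[0,Kn_k]\times[-n_k,n_k]$. This yields a constant $c_K(\mu)>0$, depending on $\mu$ and $K$ through the universal RSW constants but \emph{not} on $\lambda$, such that for every $K\ge 1$ and $k$ large,
$$\bbP_{p_c-\lambda r(n_k)}\big[\calC_{\rm hor}([0,Kn_k]\times[-n_k,n_k])\big]\ge c_K(\mu).$$

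The second step is a simple geometric inclusion. The left side $\{0\}\times[-n_k,n_k]$ consists of points $j\,{\rm e}^{{\rm i}\pi/3}$ with $|j|\le n_k$, hence lies in $\calB_{n_k}(0)$; similarly, the right side $\{Kn_k\}\times[-n_k,n_k]$ lies in $\calB_{n_k}(Kn_k)$. Any open horizontal crossing of the parallelogram therefore yields a connection from $\calB_{n_k}(0)\cap\bbT$ to $\calB_{n_k}(Kn_k)\cap\bbT$. Passing to the limit $k\to\infty$ and applying Proposition~\ref{prop:inv} with the real number $u=K$ gives
$$f(\lambda,K)\ge c_K(\mu)\qquad\text{for every }\lambda>0.$$

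The final step invokes the key feature of the near-critical scaling limit emphasized in the discussion following Proposition~\ref{prop:inv}: as $\lambda\to\infty$, the limiting crossing probabilities $f(\lambda,K)$ tend to zero for any fixed $K>0$, reflecting the fact that the scaling limit produced by \cite{GPSa} becomes effectively subcritical at the far end of the near-critical window. Picking $\lambda$ large enough that $f(\lambda,K)<c_K(\mu)$ contradicts the uniform lower bound derived above, completing the proof. The main obstacle is precisely this last step: it requires importing from \cite{GPSa} the vanishing of $f(\lambda,\cdot)$ as $\lambda\to\infty$, which is not a combinatorial consequence of RSW but is exactly the ``subcritical end'' input that motivates relying on the full near-critical scaling limit rather than on conformal invariance at $\lambda=0$ alone.
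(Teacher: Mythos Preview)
Your reduction via RSW and Proposition~\ref{prop:inv} is correct as far as it goes, but the argument is essentially circular: the input you invoke at the end, namely $f(\lambda,K)\to 0$ as $\lambda\to\infty$, is the limiting form of the very statement you are asked to prove. Granting the convergence in Proposition~\ref{prop:inv}, the assertion that for every $\mu$ there is $\lambda$ with $\limsup_n\bbP_{p_c-\lambda r(n)}[\calC_{\rm hor}([0,n]\times[0,2n])]<\mu$ and the assertion that $f(\lambda,\cdot)\to 0$ as $\lambda\to\infty$ are equivalent up to a routine RSW comparison; your argument simply translates one into the other and then cites the latter. The actual mechanism that forces crossing probabilities to collapse when $\lambda$ is large is nowhere supplied, and in the literature that mechanism is precisely the Kesten-type computation the paper carries out here. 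So you have deferred the work rather than done it.

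The paper's proof takes a completely different and self-contained route: Kesten's differential-inequality method. One differentiates $\bbP_p[\calC_{\rm hor}([0,n]\times[0,2n])]$ in $p$ via Russo's formula, identifies the derivative as a sum of pivotal probabilities, and then uses extendability, quasi-multiplicativity, and stability of the four-arm event below the characteristic length $L_\eta(p)$ to bound each pivotal probability from below by a constant times $\bbP_{p_c}[\calA_4(1,n)]=n^{-2}/r(n)$. Summing over $\asymp n^2$ sites gives $\tfrac{\rm d}{{\rm d}p}\bbP_p[\calC_{\rm hor}]\ge c/r(n)$ uniformly on $[p_c-\lambda r(n),p_c]$, and integrating yields $\bbP_{p_c-\lambda r(n)}[\calC_{\rm hor}]\le 1-c\lambda$, which is $<\mu$ for $\lambda$ large. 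This argument does not touch the scaling limit at all; if you want a genuine proof of the lemma, this is the missing idea, and your RSW detour through long rectangles and ball connections can then be dropped.
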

Note that the $\limsup$ above is in fact a limit by \cite{GPSa}, but this fact is of no relevance here. While the proof is fairly easy, we justify it in details. The main ingredient is the quasi-multiplicativity property for near-critical percolation.

\begin{proof}
Fix $\mu>0$ and let $\lambda>0$ to be fixed later. Fix $0<\eta<1/28$ and $p\in(p_0,1-p_0)$ with $p_0$ defined in Lemma~\ref{lem:2}. If
 $$\bbP_{p}\big[\calC_{\rm hor}([0,m]\times[0,2m])\big]<\eta,$$ 
 then Lemma~\ref{lem:1} implies that for any $k\ge 1$, 
$$\bbP_{p}\big[\calC_{\rm hor}([0,2^{k}n]\times[0,2^{k+1}n])\big]\le \frac1{28}\Big(28\eta\Big)^{2^{k}}\le \eta.$$
By RSW, we get that for $2^k n\le m\le 2^{k+1}n$,
\begin{align*}\bbP_{p}\big[\calC_{\rm hor}([0,m]\times[0,2m])\big]&\le \bbP_{p}\big[\calC_{\rm hor}([0,2^kn]\times[0,2^{k+2}n])\big]\\
&\le c_1\bbP_{p}\big[\calC_{\rm hor}([0,2^kn]\times[0,2^{k+1}n])\big]^{c_2}\\
&\le c_1\eta^{c_2}.\end{align*}
In conclusion, if $0<\eta\ll \mu$ are chosen in such a way that $\mu>c_1\eta^{c_2}$ and $\eta<1/28$, we find that
$\bbP_{p}\big[\calC_{\rm hor}([0,n]\times[0,2n])\big]<\eta$ implies that $\bbP_{p}\big[\calC_{\rm hor}([0,m]\times[0,2m])\big]<\mu$ for any $m\ge n$.
\medbreak

Define $$L_\eta(p)=\inf\Big\{n\ge 0:\bbP_p\big[\calC_{\rm hor}([0,n]\times[0,2n])\big]<\eta\Big\}.$$
The assumption that $\bbP_{p_c-\lambda r(n)}\big[\calC_{\rm hor}([0,m]\times[0,2m])\big]\ge \mu$ boils down to the assumption that $L_\eta(p)\ge n$ for every $p_c-\lambda r(n)<p<p_c$. We use this formulation from now on in order to bound the derivative of $\bbP_p\big[\calC_{\rm hor}([0,n]\times[0,2n])\big]$ from below. 

Russo's formula implies that 
\begin{align}\label{Russo}\frac{\rm d}{{\rm d}p}\bbP_p&\big[\calC_{\rm hor}([0,n]\times[0,2n])\big]\\
&=\sum_{v\in[0,n]\times[0,2n]}\bbP_p\big[v\text{ is pivotal for }\calC_{\rm hor}([0,n]\times[0,2n])\big],\nonumber\end{align}
where $v$ is pivotal for a configuration $\omega$ if the following is true: $\omega\in A$ if the site $v$ is switched to open and $\omega\notin A$ if it is switched to closed. In our case, $v$ is pivotal if there exist four disjoint arms, two open ones going from neighbors of $v$ to the left and right sides of $[0,n]\times[0,2n]$, and two closed ones going from neighbors of $v$ to the top and bottom sides of $[0,n]\times[0,2n]$. Since $n\le L_\eta(p)$, classical properties of arm-events (namely the extendability and quasimultiplicativity, see \cite[Propositions~15 and 16]{Nol08}) imply the existence of $c_8=c_8(\eta)>0$ such that
\begin{equation}\label{first bound}\bbP_{p}[v\text{ is pivotal for }\calC_{\rm hor}([0,n]\times[0,2n])]\ge c_8 \bbP_{p}[\calA_4(1,n)]\end{equation}
for any $v\in [\frac n3,\frac{2n}3]\times[\frac n2,n]$ and for any $p>p_c-\lambda r(n)$.
It is also well-known that under $L_\eta(p)$, arm-exponents do not vary (see e.g. \cite[Theorem~26]{Nol08} for the case of the triangular lattice), so that there exists $c_9=c_9(\eta)>0$ such that
\begin{equation}\label{second bound}\bbP_{p}[\calA_4(1,n)]\ge c_9 \bbP_{p_c}[\calA_4(1,n)]=c_9 \frac{1}{n^2r(n)}\end{equation}
for any $p_c>p>p_c-\lambda r(n)$.
Putting \eqref{Russo}, \eqref{first bound} and \eqref{second bound} together, we obtain that
$$\frac{\rm d}{{\rm d}p}\bbP_p\big[\calC_{\rm hor}([0,n]\times[0,2n])\big]\ge c_8c_9\sum_{v\in [\frac n3,\frac{2n}3]\times[\frac n2,n]} \frac{1}{n^2r(n)}= \frac{c_8c_9}{6r(n)}.$$
Integrating this inequality between $p_c-\lambda r(n)$ and $p_c$, we find that 
$$\bbP_{p_c-\lambda r(n)}\big[\calC_{\rm hor}([0,n]\times[0,2n])\big]\le 1-\frac{c_8c_9\lambda}{6 r(n)}.$$
Since $c_8$ and $c_9$ depend only on $\mu$ (since they are functions of $\eta$), we can finally conclude that for $\lambda>\frac{6}{c_8c_9}$,
$$\bbP_{p_c-\lambda r(n)}\big[\calC_{\rm hor}([0,n]\times[0,2n])\big]\le \mu.$$
\end{proof}

\paragraph{Acknowledgements.} The author was supported by the ERC AG CONFRA, as well as by the Swiss
{FNS}. The author would like to thank Nicolas Curien for a very interesting and useful discussion, and for comments on the manuscript. The author would also like to thank Itai Benjamini, Christophe Garban, G\'abor Pete and Alan Hammond for stimulating discussions.

\bibliographystyle{amsalpha}
\bibliography{bibli}

\def\cprime{$'$}
\providecommand{\bysame}{\leavevmode\hbox to3em{\hrulefill}\thinspace}
\providecommand{\MR}{\relax\ifhmode\unskip\space\fi MR }
\providecommand{\MRhref}[2]{%
  \href{http://www.ams.org/mathscinet-getitem?mr=#1}{#2}
}
\providecommand{\href}[2]{#2}
\begin{thebibliography}{DCGP11}

\bibitem[ACC90]{ACC90}
K.~Alexander, J.~T. Chayes, and L.~Chayes, \emph{The {W}ulff construction and
  asymptotics of the finite cluster distribution for two-dimensional
  {B}ernoulli percolation}, Comm. Math. Phys. \textbf{131} (1990), no.~1,
  1--50.

\bibitem[Ale92]{Ale92}
Kenneth~S. Alexander, \emph{Stability of the {W}ulff minimum and fluctuations
  in shape for large finite clusters in two-dimensional percolation}, Probab.
  Theory Related Fields \textbf{91} (1992), no.~3-4, 507--532.

\bibitem[BDC12]{BDCS11}
V.~Beffara and H.~Duminil-Copin, \emph{Smirnov's fermionic observable away from
  criticality}, Ann. Probab. \textbf{40} (2012), no.~6, 2667--2689.

\bibitem[BDC13]{BD13}
\bysame, \emph{Lectures on planar percolation with a glimpse of {S}chramm
  {L}oewner {E}volution}, arXiv:1107.0158 (2013), 43 pages.

\bibitem[BH57]{BH57}
S.~R. Broadbent and J.M. Hammersley, \emph{Percolation processes i. crystals
  and mazes}, Math. Proceedings of the Cambridge Philosophical Society
  \textbf{53} (1957), no.~03, 629--641.

\bibitem[Bod99]{Bod99}
T.~Bodineau, \emph{The {W}ulff construction in three and more dimensions},
  Comm. Math. Phys. \textbf{207} (1999), no.~1, 197--229.

\bibitem[BR06]{BR06c}
B.~Bollob{\'a}s and O.~Riordan, \emph{Percolation}, Cambridge Univ Pr, 2006.

\bibitem[Cer00]{Cer00}
Rapha{\"e}l Cerf, \emph{Large deviations for three dimensional supercritical
  percolation}, Ast\'erisque (2000), no.~267, vi+177.

\bibitem[Cer06]{Cer06}
R.~Cerf, \emph{The {W}ulff crystal in {I}sing and percolation models}, Lecture
  Notes in Mathematics, vol. 1878, Springer-Verlag, Berlin, 2006, Lectures from
  the 34th Summer School on Probability Theory held in Saint-Flour, July 6--24,
  2004, With a foreword by Jean Picard.

\bibitem[CI02]{CI02}
Massimo Campanino and Dmitry Ioffe, \emph{Ornstein-{Z}ernike theory for the
  {B}ernoulli bond percolation on {$\Bbb Z^d$}}, Ann. Probab. \textbf{30}
  (2002), no.~2, 652--682.

\bibitem[CIL10]{CIL10}
M.~Campanino, D.~Ioffe, and O.~Louidor, \emph{Finite connections for
  supercritical {B}ernoulli bond percolation in 2{D}}, Markov Process. Related
  Fields \textbf{16} (2010), no.~2, 225--266.

\bibitem[CN06]{CN06}
F.~Camia and C.~M. Newman, \emph{Two-dimensional critical percolation: the full
  scaling limit}, Comm. Math. Phys. \textbf{268} (2006), no.~1, 1--38.

\bibitem[CN07]{CM07}
\bysame, \emph{Critical percolation exploration path and {${\rm SLE}_6$}: a
  proof of convergence}, Probab. Theory Related Fields \textbf{139} (2007),
  no.~3-4, 473--519.

\bibitem[CP00]{CP00}
Rapha{\"e}l Cerf and {\'A}goston Pisztora, \emph{On the {W}ulff crystal in the
  {I}sing model}, Ann. Probab. \textbf{28} (2000), no.~3, 947--1017.

\bibitem[CS09]{CS09}
D.~Chelkak and S.~Smirnov, \emph{Universality in the {2D} {I}sing model and
  conformal invariance of fermionic observables}, Inv. Math. \textbf{189} (2012), no. 3, 515-580.

\bibitem[DC11]{Dum12}
H.~Duminil-Copin, \emph{Phase transition in random-cluster and {O}(n)-models},
  archive-ouverte.unige.ch/unige:18929 (2011), 360 pages, thesis.

\bibitem[DCGP11]{DGP12}
H.~Duminil-Copin, C.~Garban, and G.~Pete, \emph{The near-critical planar
  {FK-Ising} model}, to appear in Comm. Math. Phys., arXiv:1111.0144, 2011.

\bibitem[DCS11]{DCS11}
H.~Duminil-Copin and S.~Smirnov, \emph{Conformal invariance of lattice models},
  Lecture notes, in Probability and Statistical Physics in Two and More
  Dimensions (D.~Ellwood, C.~Newman, V.~Sidoravicius, and W.~Werner, eds.),
  CMI/AMS - Clay Mathematics Institute Proceedings, 2011.

\bibitem[DKS92]{DKS92}
R.~Dobrushin, R.~Koteck{\'y}, and S.~Shlosman, \emph{Wulff construction},
  Translations of Mathematical Monographs, vol. 104, American Mathematical
  Society, Providence, RI, 1992, A global shape from local interaction,
  Translated from the Russian by the authors.

\bibitem[GPS12]{GPS10a}
C.~Garban, G.~Pete, and O.~Schramm, \emph{Pivotal, cluster and interface
  measures for critical planar percolation}, JAMS to appear (2012),
  92.

\bibitem[GPS13]{GPSa}
\bysame, \emph{The scaling limits of dynamical and near-critical percolation},
  2013, arxiv:1305.5526, p.~86 pages.

\bibitem[Gri99]{Gri99}
G.~Grimmett, \emph{Percolation}, Springer Verlag, 1999.

\bibitem[IS98]{IS98}
Dmitry Ioffe and Roberto~H. Schonmann, \emph{Dobrushin-{K}oteck\'y-{S}hlosman
  theorem up to the critical temperature}, Comm. Math. Phys. \textbf{199}
  (1998), no.~1, 117--167.

\bibitem[Ken00]{Ken00}
R.~Kenyon, \emph{Conformal invariance of domino tiling}, Ann. Probab.
  \textbf{28} (2000), no.~2, 759--795.

\bibitem[Kes80]{Kes80}
H.~Kesten, \emph{The critical probability of bond percolation on the square
  lattice equals {${1\over 2}$}}, Comm. Math. Phys. \textbf{74} (1980), no.~1,
  41--59.

\bibitem[Kes82]{Kes82}
\bysame, \emph{Percolation theory for mathematicians}, Boston, 1982.

\bibitem[Kes87]{Kes87}
\bysame, \emph{Scaling relations for {$2$}{D}-percolation}, Comm. Math. Phys.
  \textbf{109} (1987), no.~1, 109--156.

\bibitem[MW73]{McCW73}
B.M. McCoy and T.T. Wu, \emph{The two-dimensional {I}sing model}, Harvard
  University Press, Cambridge, MA, 1973.

\bibitem[Nol08]{Nol08}
P.~Nolin, \emph{Near-critical percolation in two dimensions}, Electron. J.
  Probab. \textbf{13} (2008), no. 55, 1562--1623.

\bibitem[Pfi91]{Pfi91}
C.-E. Pfister, \emph{Large deviations and phase separation in the
  two-dimensional {I}sing model}, Helv. Phys. Acta \textbf{64} (1991), no.~7,
  953--1054.

\bibitem[Rus78]{Rus78}
L.~Russo, \emph{A note on percolation}, Z. Wahrscheinlichkeitstheorie und Verw.
  Gebiete \textbf{43} (1978), no.~1, 39--48.

\bibitem[Smi01]{Smi01}
S.~Smirnov, \emph{Critical percolation in the plane: conformal invariance,
  {C}ardy's formula, scaling limits}, C. R. Acad. Sci. Paris S\'er. I Math.
  \textbf{333} (2001), no.~3, 239--244.

\bibitem[Smi10]{Smi10}
\bysame, \emph{Conformal invariance in random cluster models. {I}.
  {H}olomorphic fermions in the {I}sing model}, Ann. of Math. (2) \textbf{172}
  (2010), no.~2, 1435--1467.

\bibitem[SS11]{SS11}
Oded Schramm and Stanislav Smirnov, \emph{On the scaling limits of planar
  percolation}, Selected works of {O}ded {S}chramm. {V}olume 1, 2, Sel. Works
  Probab. Stat., Springer, New York, 2011, With an appendix by Christophe
  Garban, pp.~1193--1247.

\bibitem[SW78]{SW78}
P.~D. Seymour and D.~J.~A. Welsh, \emph{Percolation probabilities on the square
  lattice}, Ann. Discrete Math. \textbf{3} (1978), 227--245, Advances in Graph
  Theory (Cambridge Combinatorial Conf., Trinity College, Cambridge, 1977).

\bibitem[SW01]{SW01}
S.~Smirnov and W.~Werner, \emph{Critical exponents for two-dimensional
  percolation}, Math. Res. Lett. \textbf{8} (2001), no.~5-6, 729--744.

\bibitem[Wer07]{Wer07}
W.~Werner, \emph{{\it Lectures on two-dimensional critical percolation}}, IAS
  Park City Graduate Summer School, 2007.

\end{thebibliography}

\begin{flushright}
\footnotesize\obeylines
  \textsc{Universit\'e de Gen\`eve}
  \textsc{Gen\`eve, Switzerland}
  \textsc{E-mail:} \texttt{hugo.duminil@unige.ch}
\end{flushright}
\end{document}